\let\ds=\displaystyle
\let\to=\rightarrow
\newcommand{\be}{\begin{enumerate}}
\newcommand{\ee}{\end{enumerate}}
\newcommand{\bi}{\begin{itemize}}
\newcommand{\ei}{\end{itemize}}
\def \pa#1{{\left(#1\right)}}
\def\bs{\bigskip}
\def\ms{\medskip}
\font\ineg=msam8
\def\ie{\mathrel{\hbox{\ineg 6}}} % <= français
\def\se{\mathrel{\hbox{\ineg >}}} % >= français
\font\matcinq=msbm5
\font\matsept=msbm7
\font\matdix=msbm10
\def\mat{\fam\matfam}
\def\N{{\mat N}}
\newcommand{\bib}[2]{\hbox{\hbox to 12mm{[#1] :\hfill} \hfill \hbox to 144mm{\vtop{\hsize=144mm#2\vfill\ms}\hfill}}}
\let\phi=\varphi
\newtheorem{Thm}{Theorem I-\!}[section]
\newtheorem{Lem}{Lemma I-\!}[section]
\def\wpa{\varcurlyvee} %c'est \twp à une constante près
\def\twpa{\widetilde{\varcurlyvee}} %c'est \wp à une constante près
\begin{document}

\begin{center}
\Large{\textsc{Structure and bases of modular space sequences}}

\Large{\textsc{$(M_{2k}(\Gamma_0(N)))_{k\in \mathbb{N}^*}$ and $(S_{2k}(\Gamma_0(N)))_{k\in \mathbb{N}^*}$}}
\bs
\ms

\large\textsc{Part III: Cuspidal spaces}
\end{center}
\ms

%\author{Jean-Christophe Feauveau}
\begin{center}
\large Jean-Christophe Feauveau
\footnote{Jean-Christophe Feauveau,\\
Professeur en classes préparatoires au lycée Bellevue,\\
135, route de Narbonne BP. 44370, 31031 Toulouse Cedex 4, France,\\
email: Jean-Christophe.Feauveau@ac-toulouse.fr
}

\end{center}

\begin{center}
%\date{\today}
\large September 01, 2018
\end{center}

\bs
%%%%%%%%%%%%%%%%%%%%%%%%%%%%%%%%%%%%
%%%%%%%%%%%%% 	   	%%%%%%%%%%%%%%%%%%
%%%%%%%%%%%%%  	Abstract  	%%%%%%%%%%%%
%%%%%%%%%%%%% 	    	%%%%%%%%%%%%%%%%%%
%%%%%%%%%%%%%%%%%%%%%%%%%%%%%%%%%%%%

\textsc{Abstract.}

Based on the notion of strong modular form developed in Part I \cite{FeauFM1}, we propose to structure the family of cuspidal  modular form spaces $(S_{2k}(\Gamma_0(N)))_{k\in \; \mathbb{N}^*}$ and to determine bases for each of these spaces, once known bases for the first values of $k$.

We then apply these theoretical results to explicitly determine bases for space families $(S_{2k}(\Gamma_0(N)))_{k\in \; \mathbb{N}^*}$ when $1\leq N \leq 10$.
\bs
\bs

\textsc{Key words.} modular forms, modular units, elliptic functions, Dedekind's eta function.
\bs

Classification A.M.S. 2010: 11F11, 11G16, 11F33, 33E05.
\bs
\bs

%{\bf Introduction}
%\ms

\section{-- Structure and bases of $\pa{S_{2k}(\Gamma(N))}_{k\in \N^*}$}
\ms

The purpose of this paragraph is to structure the cuspidal modular form spaces of a given level $N$, and to obtain exploitable results to build unitary upper triangular bases for these spaces $\pa{S_{2k}(\Gamma(N))}_{k\in \N^*}$.
\ms

We write $\delta_{2k}(N) = \dim(S_{2k}(\Gamma_0(N)))$. 
Thereafter, ${\cal B}_{2k}(\Gamma_0(N)) = (E_{2k,N}^{(s)})_{0\ie s \ie d_{2k}(N)-1}$ denotes a $M_{2k}(\Gamma_0(N))$ unitary upper triangular basis while ${\cal C}_{2k}(\Gamma_0(N)) = (F_{2k,N}^{(s)})_{1\ie s\ie \delta_{2k}(N)}$ refers to a $S_{2k}(\Gamma_0(N))$ unitary upper triangular basis.
\ms

Knowing a family $({\cal B}_{2k}(\Gamma_0(N)))_{k\in\N^*}$, we are looking for a family $({\cal C}_{2k}(\Gamma_0(N)))_{k\in\N^*}$. For that, the main tool remains the strong $\Delta_N$ modular form, even if this one is not cupsidal.
\ms

The application $\varphi: S_{2k}(\Gamma_0(N)) \to \{\Phi\in S_{2k+\rho_N}(\Gamma_0(N)) \ / \nu(\Phi) > \nu(\Delta)\}$ defined by $\varphi(\Phi) = \Delta_N\Phi$ is clearly an isomorphism, and therefore:
\begin{equation}
\forall k\in \N^*, \ \ \Delta_N S_{2k}(\Gamma_0(N)) = \{\Phi\in S_{2k+\rho_N}(\Gamma_0(N)) \ / \ \nu(\Phi) > \nu(\Delta)\}.
\end{equation}

%Ce que SAGE confirme pour tout $N\in \N^*$ !
\ms

The isomorphism $\varphi$ leads to the following result.
\begin{Lem}\label{DimS2}
With previous notations, for $2k \se 2$,
\begin{equation}
S_{2k+\rho_N}(\Gamma_0(N)) = Vect(F_{2k+\rho_N,N}^{(s)} \ / \ \nu(F_{2k+\rho_N,N}^{(s)}) \ie \nu(\Delta_N)) \oplus \Delta_N S_{2k}(\Gamma_0(N)).
\end{equation}

In addition, you can choose
\begin{equation}
\forall s \in \llbracket\nu(\Delta_N)+1, \delta_{2k}(N)\rrbracket, \ \ F_{2k+\rho_N,N}^{(s)} = \Delta_N.F_{2k,N}^{(s-\nu(\Delta_N))}.
\end{equation}

\end{Lem}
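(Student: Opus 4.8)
The plan is to exploit two structural facts already at hand: that the valuation $\nu$ is additive under multiplication, $\nu(\Delta_N\Phi)=\nu(\Delta_N)+\nu(\Phi)$, and that in a unitary upper triangular basis the vectors have pairwise distinct valuations, with the $s$-th vector of valuation exactly $s$. Granting these, the decomposition (2) is nothing but a reading of equation (1) through the filtration by valuation, and the explicit choice (3) is the transport of the basis ${\cal C}_{2k}(\Gamma_0(N))$ through the isomorphism $\varphi$. The genuine arithmetic content, namely that $\varphi$ surjects onto the forms of valuation $>\nu(\Delta_N)$, is equation (1) and may be invoked directly.

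First I would establish the decisive elementary observation that, for any threshold $m$, the span of the basis vectors of valuation $>m$ is the whole subspace of forms of valuation $>m$:
\[
Vect\pa{F_{2k+\rho_N,N}^{(s)} : \nu(F_{2k+\rho_N,N}^{(s)}) > m} = \ac{\Phi \in S_{2k+\rho_N}(\Gamma_0(N)) : \nu(\Phi) > m}.
\]
The inclusion $\subseteq$ is immediate; for $\supseteq$ one writes $\Phi=\sum_s c_sF_{2k+\rho_N,N}^{(s)}$ and notes that, the valuations being pairwise distinct, no leading term cancels, so $\nu(\Phi)=\min\ac{\nu(F^{(s)}_{2k+\rho_N,N}) : c_s\neq 0}$, whence $\nu(\Phi)>m$ forces $c_s=0$ for every $s$ with $\nu(F^{(s)}_{2k+\rho_N,N})\leq m$. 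Taking $m=\nu(\Delta_N)$ and combining with equation (1) gives
\[
\Delta_N S_{2k}(\Gamma_0(N)) = Vect\pa{F_{2k+\rho_N,N}^{(s)} : \nu(F_{2k+\rho_N,N}^{(s)}) > \nu(\Delta_N)}.
\]
Since the complementary family $\pa{F_{2k+\rho_N,N}^{(s)} : \nu(F^{(s)}_{2k+\rho_N,N})\leq \nu(\Delta_N)}$ spans a supplementary subspace (the full family being a basis), the direct sum (2) drops out.

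For the explicit choice (3), I would push ${\cal C}_{2k}(\Gamma_0(N))=(F_{2k,N}^{(s)})_{1\leq s\leq \delta_{2k}(N)}$ forward by $\varphi$. By additivity of $\nu$ and since $\Delta_N$ is itself unitary, each $\Delta_N F_{2k,N}^{(s)}$ is unitary of valuation $\nu(\Delta_N)+s$; as $\varphi$ is an isomorphism these $\delta_{2k}(N)$ forms are independent and, having distinct valuations all $>\nu(\Delta_N)$, constitute a unitary upper triangular basis of $\Delta_N S_{2k}(\Gamma_0(N))$. Setting $F_{2k+\rho_N,N}^{(\nu(\Delta_N)+s)}:=\Delta_N F_{2k,N}^{(s)}$ for $1\leq s\leq \delta_{2k}(N)$ and retaining the original vectors of valuation $\leq\nu(\Delta_N)$ for the first $\nu(\Delta_N)$ indices then assembles, via (2), a unitary upper triangular basis of $S_{2k+\rho_N}(\Gamma_0(N))$ of the announced shape, the substitution $s\mapsto s-\nu(\Delta_N)$ yielding (3).

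The hard part will be the bookkeeping of indices rather than any analysis: one must be sure that the low-valuation block contains exactly $\nu(\Delta_N)$ vectors, so that $s\mapsto s-\nu(\Delta_N)$ maps the high indices bijectively onto $\llbracket 1,\delta_{2k}(N)\rrbracket$. This rests on the convention that the $s$-th vector of a unitary upper triangular basis has valuation $s$, together with the dimension identity $\delta_{2k+\rho_N}(N)=\delta_{2k}(N)+\nu(\Delta_N)$ supplied by $\varphi$ and the decomposition (2); here the high indices run up to $\delta_{2k+\rho_N}(N)$. I would also treat the degenerate case $\delta_{2k}(N)=0$ separately, where the high block is empty and the statement collapses to $S_{2k+\rho_N}(\Gamma_0(N))=Vect(F_{2k+\rho_N,N}^{(s)} : \nu(F^{(s)}_{2k+\rho_N,N})\leq \nu(\Delta_N))$.
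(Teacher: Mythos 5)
Your treatment of the decomposition (2) and of the pushforward is sound and is exactly the route the paper intends (the paper offers nothing beyond ``the isomorphism $\varphi$ leads to the following result''): the observation that, valuations in a unitary upper triangular family being pairwise distinct, no leading term can cancel, so the span of the basis vectors of valuation $>\nu(\Delta_N)$ is precisely the subspace $\ac{\Phi \ / \ \nu(\Phi)>\nu(\Delta_N)} = \Delta_N S_{2k}(\Gamma_0(N))$, is the right fleshing-out of equation (1); and multiplying ${\cal C}_{2k}(\Gamma_0(N))$ by the unitary form $\Delta_N$ does yield a unitary upper triangular basis of $\Delta_N S_{2k}(\Gamma_0(N))$ which, adjoined to the low-valuation block, gives a basis of $S_{2k+\rho_N}(\Gamma_0(N))$.

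The genuine gap is in your final bookkeeping paragraph. You rest the index normalization on ``the convention that the $s$-th vector of a unitary upper triangular basis has valuation $s$''. No such convention is available: these bases may have jumps, i.e. $\nu(F_{2k,N}^{(s)})\ge s$ with strict inequality possible (the paper pointedly describes the level-$1$ basis as triangular \emph{without jump}, implying jumps occur in general), and the paper's own Lemma III-\ref{LemDim3} exhibits the failure: for $k=1$ the low block has only $\nu(\Delta_N)-1$ elements, so some index $s\le \nu(\Delta_N)$ carries $\nu(F_{2+\rho_N,N}^{(s)})>s$. Consequently your derivation of ``low block of size exactly $\nu(\Delta_N)$'', and with it of the identity $\delta_{2k+\rho_N}(N)=\delta_{2k}(N)+\nu(\Delta_N)$, proceeds from a false premise --- and the count is in fact not deducible from the ingredients you allow yourself: in the paper it is obtained only \emph{after} this lemma, by combining it with the dimension formulae for modular spaces (Lemma III-\ref{LemDim3}), and it holds only for $k\ge 2$, with the value $\nu(\Delta_N)-1$ at $k=1$. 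Using the count as an input to the lemma, as you do, would also make the paper's subsequent deduction of equations (7)--(8) circular. The repair is to state the choice intrinsically --- the basis vectors of valuation $>\nu(\Delta_N)$ may be taken equal to $\Delta_N F_{2k,N}^{(s')}$, $1\le s'\le \delta_{2k}(N)$ --- and to defer the identification of the index offset with $\nu(\Delta_N)$ (so the shift $s\mapsto s-\nu(\Delta_N)$ in (3)) to that later cardinality computation, where it is justified precisely for $k\ge 2$.
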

\ms

The formulae giving the dimensions of the modular spaces and Theorem I-7.1 give the following result.

\begin{Lem}\label{LemDim3}
Let $N$ be in $\N^*$, we write down $\rho_N$ the $\Delta_N$ weight. For $k \se 2$,
\begin{equation}
\dim(S_{2k+\rho_N}(\Gamma_0(N))) - \dim(S_{2k}(\Gamma_0(N))) = \nu(\Delta_N). 
\end{equation}
and for $k=1$,
\begin{equation}
\dim(S_{2+\rho_N}(\Gamma_0(N))) - \dim(S_{2}(\Gamma_0(N))) = \nu(\Delta_N)-1. 
\end{equation}
\end{Lem}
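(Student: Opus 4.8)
The plan is to evaluate both dimensions from the explicit Riemann--Roch dimension formulas for $\Gamma_0(N)$ and subtract. Write $\mu$ for the index of $\Gamma_0(N)$ in $\mathrm{PSL}_2(\Z)$, $g$ for the genus of $X_0(N)$, and $\varepsilon_2,\varepsilon_3,\varepsilon_\infty$ for the numbers of elliptic points of order $2$, of order $3$, and of cusps. For every even weight $m\geq 4$,
\[
\dim S_m(\Gamma_0(N)) = (m-1)(g-1) + \left\lfloor\tfrac{m}{4}\right\rfloor\varepsilon_2 + \left\lfloor\tfrac{m}{3}\right\rfloor\varepsilon_3 + \left(\tfrac{m}{2}-1\right)\varepsilon_\infty ,
\]
whereas the weight-$2$ space is exceptional: $\dim S_2(\Gamma_0(N)) = g$, which is $1$ more than the value $g-1$ the displayed formula returns at $m=2$. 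This single unit of excess is what I expect to account for the discrepancy between the two cases of the lemma.

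First I would treat $k\geq 2$, where $m=2k$ and $m=2k+\rho_N$ are both $\geq 4$, so the formula applies to each. Subtracting, the genus and cusp terms contribute $\rho_N(g-1)$ and $\tfrac{\rho_N}{2}\varepsilon_\infty$, while the elliptic terms contribute $\bigl(\lfloor\tfrac{2k+\rho_N}{4}\rfloor-\lfloor\tfrac{2k}{4}\rfloor\bigr)\varepsilon_2$ and the analogous expression with $3$ in the denominators. Here I invoke Theorem I-7.1, which pins down the weight $\rho_N$ of the strong form $\Delta_N$: because $\rho_N$ is a multiple of $12$, each floor difference is constant in $k$ and equals $\tfrac{\rho_N}{4}$, respectively $\tfrac{\rho_N}{3}$. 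The difference therefore collapses to $\rho_N\bigl[(g-1)+\tfrac{\varepsilon_2}{4}+\tfrac{\varepsilon_3}{3}+\tfrac{\varepsilon_\infty}{2}\bigr]$, and the genus relation $(g-1)+\tfrac{\varepsilon_2}{4}+\tfrac{\varepsilon_3}{3}+\tfrac{\varepsilon_\infty}{2}=\tfrac{\mu}{12}$ reduces this to $\tfrac{\rho_N\mu}{12}$.

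It remains to identify $\tfrac{\rho_N\mu}{12}$ with $\nu(\Delta_N)$, which is again the content of Theorem I-7.1 read through the valence formula: $\Delta_N$ has no zero on the upper half-plane, so all of its zeros lie at the cusps and their total order is $\tfrac{\rho_N\mu}{12}=\nu(\Delta_N)$. This gives the first identity. For $k=1$ I would repeat the subtraction but use the true value $\dim S_2(\Gamma_0(N))=g$ in place of the generic $g-1$; since $2+\rho_N\geq 4$ the larger weight still obeys the generic formula, so the whole difference is diminished by exactly the unit of excess noted above, yielding $\nu(\Delta_N)-1$. The only real obstacle is bookkeeping: one must check that the floor jumps are genuinely independent of $k$ (guaranteed by the $12\mid\rho_N$ statement of Theorem I-7.1, and automatic when $\Gamma_0(N)$ has no elliptic points) and must not overlook the weight-$2$ anomaly, which is the sole origin of the $-1$.
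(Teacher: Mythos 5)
Your route is the same as the paper's: Lemma III-\ref{LemDim3} is obtained there precisely by combining the explicit dimension formulae with Theorem I-7.1, and most of your bookkeeping is correct --- the generic formula for $\dim S_m(\Gamma_0(N))$ for even $m\geq 4$, the exceptional value $\dim S_2(\Gamma_0(N))=g$, the relation $(g-1)+\tfrac{\varepsilon_2}{4}+\tfrac{\varepsilon_3}{3}+\tfrac{\varepsilon_\infty}{2}=\tfrac{\mu}{12}$, and the valence-formula identification of the total zero order of $\Delta_N$ with $\tfrac{\rho_N\mu}{12}$, as well as your accounting of the weight-$2$ anomaly as the sole source of the $-1$ when $k=1$. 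The genuine defect is the assertion that Theorem I-7.1 makes $\rho_N$ a multiple of $12$. It does not, and the paper itself refutes it repeatedly: $\rho_2=4$, $\rho_3=6$, $\rho_4=\rho_6=\rho_9=2$, $\rho_5=\rho_{10}=4$, $\rho_7=6$. Since this divisibility is the only justification you offer for the constancy in $k$ of the floor differences $\lfloor(2k+\rho_N)/4\rfloor-\lfloor 2k/4\rfloor$ and $\lfloor(2k+\rho_N)/3\rfloor-\lfloor 2k/3\rfloor$, your proof as written applies to no level $2\leq N\leq 10$ treated in the paper; and the constancy is not a formality, since for instance $\lfloor(2k+2)/4\rfloor-\lfloor 2k/4\rfloor$ oscillates between $0$ and $1$ as $k$ varies, which would make the left-hand side of the lemma non-constant whenever $\varepsilon_2>0$ and $4\nmid\rho_N$.

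The step can be repaired, and the repair is exactly where the strong-form property of $\Delta_N$ must be invoked a second time. Because $\Delta_N\in M_{\rho_N}(\Gamma_0(N))$ has no zeros on the upper half-plane, it is nonvanishing at every elliptic point; at an elliptic point of order $2$ the local vanishing order of a form of even weight $m$ is congruent to $m/2$ modulo $2$, and at an elliptic point of order $3$ it is congruent to $-m/2$ modulo $3$, so nonvanishing forces $4\mid\rho_N$ whenever $\varepsilon_2>0$ and $3\mid\rho_N$ whenever $\varepsilon_3>0$. Under exactly these divisibilities (and with no condition needed when the corresponding $\varepsilon$ vanishes, as for $N=4$, where $\rho_4=2$ but $\varepsilon_2=\varepsilon_3=0$) the floor differences are constant, equal to $\tfrac{\rho_N}{4}\varepsilon_2$ and $\tfrac{\rho_N}{3}\varepsilon_3$, after which your computation closes as stated: the difference equals $\tfrac{\rho_N\mu}{12}=\nu(\Delta_N)$ for $k\geq 2$, and is one less for $k=1$. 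One further nuance is worth making explicit: $\nu(\Delta_N)$ is the valuation at the single cusp $\infty$, so the identity $\nu(\Delta_N)=\tfrac{\rho_N\mu}{12}$ requires the zeros of $\Delta_N$ to be concentrated at that cusp (the defining property of a strong modular unit), not merely located ``at the cusps'' as your phrasing suggests.
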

\ms

But from lemma III-\ref{DimS2} we deduce equality

\begin{equation}
\forall k\se 1, \ \ \dim(S_{2k+\rho_N}(\Gamma_0(N))) = \dim(S_{2k}(\Gamma_0(N))) + Card(\{s \ / \ \nu(F_{2k+\rho_N,N}^{(s)}) \ie \nu(\Delta_N)\})
\end{equation}

Therefore, if $k \se 2$,
\begin{equation}
Card(\{s \ / \ \nu(F_{2k+\rho_N,N}^{(s)}) \ie \nu(\Delta_N)\}) = \nu(\Delta_N)
\end{equation}
and
\begin{equation}
Card(\{s \ / \ \nu(F_{2+\rho_N,N}^{(s)}) \ie \nu(\Delta_N)\}) = \nu(\Delta_N)-1.
\end{equation}
\ms

%SAGE confirme le résultat pour $N = 6$, $7$, $15$.
From these results on the valuations of the elements of the unitary upper triangular bases, we derive the following theorem.

\begin{Thm}\label{DimS3}
Let be $N\in\N^*$ and, for the whole $k\se 1$, $(F_{2k,N}^{(s)})_{1\ie s \ie \delta_{2k}(N)}$ a $S_{2k}(\Gamma_0(N))$ unitary upper triangular basis. So
\begin{equation}
\forall k \se \frac{\rho_N}{2}+2, \ \ \forall s\in \llbracket1,\nu(\Delta_N)\rrbracket, \ \ \nu(F_{2k,N}^{(s)}) = s.
\end{equation}
In addition, you can choose
\begin{equation}
\forall k \se \frac{\rho_N}{2}+2, \ \ \forall s\in \llbracket1,\nu(\Delta_N)\rrbracket, \ \ F_{2k,N}^{(s)} = F_{\rho_N+4,N}^{(s)}[E_{2,N}^{(0)}]^{k-\frac{\rho_N}{2}-2},
\end{equation}
where $E_{2,N}^{(0)}$ is an element of $M_{2}(\Gamma_0(N))$ unitary and zero valuation.
\end{Thm}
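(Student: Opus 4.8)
The plan is to deduce both assertions from the valuation count $Card(\{s \ / \ \nu(F_{2k+\rho_N,N}^{(s)})\le\nu(\Delta_N)\})=\nu(\Delta_N)$ (valid for $k\ge 2$) established just above, together with the rigidity of unitary upper triangular bases. First I would record the structural fact that drives everything: in such a basis $(F_{2k,N}^{(s)})_{1\le s\le\delta_{2k}(N)}$ the valuations $\nu(F_{2k,N}^{(1)})<\nu(F_{2k,N}^{(2)})<\cdots$ are strictly increasing positive integers, and they exhaust the set of valuations attained by the nonzero elements of $S_{2k}(\Gamma_0(N))$; in particular $\nu(F_{2k,N}^{(s)})$ is intrinsic to the space, being the $s$-th smallest attainable valuation.

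For the first identity, fix $k\ge\rho_N/2+2$ and put $k'=k-\rho_N/2$, so that $k'\ge 2$ and $2k=2k'+\rho_N$. The count recalled above then yields $Card(\{s \ / \ \nu(F_{2k,N}^{(s)})\le\nu(\Delta_N)\})=\nu(\Delta_N)$. Since the valuations increase strictly with $s$, the indices realizing a valuation $\le\nu(\Delta_N)$ are precisely $s=1,\dots,\nu(\Delta_N)$, and their valuations form a strictly increasing sequence of $\nu(\Delta_N)$ positive integers all bounded by $\nu(\Delta_N)$. The only such sequence is $1,2,\dots,\nu(\Delta_N)$, whence $\nu(F_{2k,N}^{(s)})=s$ for $1\le s\le\nu(\Delta_N)$.

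For the explicit choice I would introduce, for $1\le s\le\nu(\Delta_N)$, the forms $G^{(s)}:=F_{\rho_N+4,N}^{(s)}\,[E_{2,N}^{(0)}]^{\,k-\rho_N/2-2}$. A quick check places each $G^{(s)}$ in $S_{2k}(\Gamma_0(N))$: the weights add to $(\rho_N+4)+2(k-\rho_N/2-2)=2k$, and the product of a cusp form with a modular form stays cuspidal. Since $E_{2,N}^{(0)}$ is unitary with $\nu(E_{2,N}^{(0)})=0$, multiplication by $[E_{2,N}^{(0)}]^{k-\rho_N/2-2}$ leaves both the leading coefficient and the valuation unchanged; hence $G^{(s)}$ is unitary, and applying the first identity in weight $\rho_N+4$ (the case $k=\rho_N/2+2$, giving $\nu(F_{\rho_N+4,N}^{(s)})=s$) shows $\nu(G^{(s)})=s$.

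It then remains to assemble these into a basis. The forms $G^{(1)},\dots,G^{(\nu(\Delta_N))}$ have pairwise distinct valuations $1,\dots,\nu(\Delta_N)$, hence are linearly independent and unitary with strictly increasing valuations; completing them by unitary forms realizing the remaining --- necessarily larger --- attainable valuations produces a family of $\delta_{2k}(N)$ forms with distinct valuations, that is, a unitary upper triangular basis of $S_{2k}(\Gamma_0(N))$ whose first $\nu(\Delta_N)$ members are the $G^{(s)}$. This legitimises the choice $F_{2k,N}^{(s)}=G^{(s)}$. I expect the only delicate point --- and the main obstacle --- to be precisely this completion step: one must be sure that prescribing the bottom $\nu(\Delta_N)$ echelon steps is compatible with the definition of a unitary upper triangular basis, which is exactly what the basis-independence of the attainable valuation set, noted at the outset, guarantees.
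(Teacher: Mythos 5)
Your proposal is correct and follows essentially the same route as the paper: the paper derives the theorem precisely from the cardinality count $Card(\{s \ / \ \nu(F_{2k,N}^{(s)}) \ie \nu(\Delta_N)\}) = \nu(\Delta_N)$ (valid once $k-\frac{\rho_N}{2}\se 2$) combined with the strict growth of valuations in a unitary upper triangular basis, forcing $\nu(F_{2k,N}^{(s)})=s$ by pigeonhole, and then obtains the explicit choice by multiplying the weight-$(\rho_N+4)$ cusp forms by powers of the unitary, valuation-zero form $E_{2,N}^{(0)}$, exactly as you do. Your completion step is also sound, and in the paper it is effected concretely by the direct sum decomposition of lemma III-\ref{DimS2}, which supplies the remaining basis elements as $\Delta_N.F_{2k-\rho_N,N}^{(s-\nu(\Delta_N))}$.
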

\bs

The following theorem, a direct consequence of lemma III-\ref{DimS2} and Theorem III-\ref{DimS3}, enables one to structure the family $(S_{2k}(\Gamma_0(N)))_{k\in \N^*}$ and to describe the construction of unitary upper triangular bases.

\begin{Thm}\label{ThmStruct102}
Let $N$ be a positive integer, then
\begin{equation}
\forall k\in \N, \ k\se \frac{\rho_N}{2} +2, \ \ S_{2k}(\Gamma_0(N)) = \Delta_N.S_{2k-\rho_N}(\Gamma_0(N)) \oplus Vect\pa{F_{\rho_N+4,N}^{(s)}[E_{2,N}^{(0)}]^{k-\frac{\rho_N}{2}-2} \ / \ 1\ie s \ie \nu(\Delta_N)}.
\end{equation}
Therefore, if $k\se 2$ and $k = q\frac{\rho_N}{2}+r$ with $2\ie r \ie \frac{\rho_N}{2}+1$,
\begin{equation}
S_{2k}(\Gamma_0(N)) = \Delta_N^{q}. S_{2r}(\Gamma_0(N)) \bigoplus_{n=0}^{q-1}  \Delta_N^n .Vect\pa{F_{\rho_N+4,N}^{(s)}[E_{2,N}^{(0)}]^{k-(n+1)\frac{\rho_N}{2}-2} \ / \ 1\ie s \ie \nu(\Delta_N)}.
\end{equation}
\end{Thm}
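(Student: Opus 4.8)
The plan is to assemble Lemma III-\ref{DimS2} and Theorem III-\ref{DimS3}: both carry the analytic content, so what remains is an identification of summands followed by an induction on the power of $\Delta_N$. Throughout I use that $\frac{\rho_N}{2}\in\N$, which is implicit since $\Delta_N$ must have even weight for the spaces $S_{2k+\rho_N}(\Gamma_0(N))$ to be of even weight.

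First I would establish the first identity. Fix $k\se\frac{\rho_N}{2}+2$ and apply Lemma III-\ref{DimS2} with the integer $k$ there replaced by $k-\frac{\rho_N}{2}$ (legitimate, since $k-\frac{\rho_N}{2}\se2$), which gives
\[
S_{2k}(\Gamma_0(N)) = Vect\pa{F_{2k,N}^{(s)} \ / \ \nu(F_{2k,N}^{(s)})\ie\nu(\Delta_N)} \oplus \Delta_N S_{2k-\rho_N}(\Gamma_0(N)).
\]
To recognise the first summand I would invoke Theorem III-\ref{DimS3}: for $s\in\llbracket1,\nu(\Delta_N)\rrbracket$ it gives $\nu(F_{2k,N}^{(s)})=s\ie\nu(\Delta_N)$, while for $s>\nu(\Delta_N)$ the strictly increasing valuations of a unitary upper triangular basis force $\nu(F_{2k,N}^{(s)})>\nu(\Delta_N)$ (equivalently, this is the cardinality count $Card(\{s \ / \ \nu(F_{2k,N}^{(s)})\ie\nu(\Delta_N)\})=\nu(\Delta_N)$ established above for $k\se2$). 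Hence $\{s \ / \ \nu(F_{2k,N}^{(s)})\ie\nu(\Delta_N)\}=\llbracket1,\nu(\Delta_N)\rrbracket$, and substituting the explicit basis elements $F_{2k,N}^{(s)}=F_{\rho_N+4,N}^{(s)}[E_{2,N}^{(0)}]^{k-\frac{\rho_N}{2}-2}$ from Theorem III-\ref{DimS3} rewrites that summand as the asserted $Vect$, which is the first identity.

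I would then deduce the second identity by induction on $q$, writing $k=q\frac{\rho_N}{2}+r$ with $2\ie r\ie\frac{\rho_N}{2}+1$. For $q=0$ the statement is vacuous ($k=r$, empty direct sum). For $q\se1$ one has $k\se\frac{\rho_N}{2}+2$, so the first identity applies; moreover $k-\frac{\rho_N}{2}=(q-1)\frac{\rho_N}{2}+r$, so the induction hypothesis decomposes $S_{2k-\rho_N}(\Gamma_0(N))$. Multiplying that decomposition through by $\Delta_N$ — an injective map on cusp forms by the isomorphism $\varphi$, hence carrying direct sums to direct sums — using $(k-\frac{\rho_N}{2})-(n+1)\frac{\rho_N}{2}-2=k-(n+2)\frac{\rho_N}{2}-2$ and reindexing $n\mapsto n+1$, I would obtain $\Delta_N S_{2k-\rho_N}(\Gamma_0(N)) = \Delta_N^{q}S_{2r}(\Gamma_0(N))\bigoplus_{n=1}^{q-1}\Delta_N^n Vect\pa{F_{\rho_N+4,N}^{(s)}[E_{2,N}^{(0)}]^{k-(n+1)\frac{\rho_N}{2}-2} \ / \ 1\ie s\ie\nu(\Delta_N)}$. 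Adding the $n=0$ term supplied by the first identity completes the sum to $\bigoplus_{n=0}^{q-1}$, which is the claim.

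The arguments are routine; the only delicate point is the index bookkeeping in the inductive step — tracking how the exponent of $E_{2,N}^{(0)}$ shifts under multiplication by $\Delta_N$ and the accompanying reindexing of $\bigoplus$ — together with verifying that the hypothesis $k\se\frac{\rho_N}{2}+2$ needed to invoke the first identity holds whenever $q\se1$. Directness of every sum is inherited, with no extra work, from the "$\oplus$" in Lemma III-\ref{DimS2} and the injectivity of $\Phi\mapsto\Delta_N\Phi$.
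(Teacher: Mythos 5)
Your proposal is correct and follows exactly the route the paper intends: the paper gives no written proof beyond declaring the theorem ``a direct consequence of lemma III-\ref{DimS2} and Theorem III-\ref{DimS3}'', and your argument is precisely that assembly --- Lemma III-\ref{DimS2} applied at $k-\frac{\rho_N}{2}$, the summand identified via the valuations $\nu(F_{2k,N}^{(s)})=s$ and the choice $F_{2k,N}^{(s)}=F_{\rho_N+4,N}^{(s)}[E_{2,N}^{(0)}]^{k-\frac{\rho_N}{2}-2}$ from Theorem III-\ref{DimS3}, then induction on $q$ using injectivity of $\Phi\mapsto\Delta_N\Phi$. Your index bookkeeping and the verification that $q\se 1$ forces $k\se\frac{\rho_N}{2}+2$ are both sound, so this is a faithful (and more detailed) rendering of the paper's proof.
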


So, the modular forms $\Delta_N$ and $E_{2,N}^{(0)}$ being known \cite{FeauFM1,FeauFM2}, the knowledge
\be
\item[(i)]
of bases $({\cal C}_{2k}(\Gamma_0(N)))_{1\ie k\ie \rho_N+1}$
\item[(ii)]
of the first $\nu(\Delta_N)$ items of ${\cal C}_{\rho_N+4}(\Gamma_0(N))$
\ee
provides a set of $({\cal C}_{2k}(\Gamma_0(N)))_{k\in \N^*}$ unitary upper triangular bases adapted to $(S_{2k}(\Gamma_0(N)))_{k\in \N^*}$.
\bs
%Une fois encore, une forte redondance permet en général de réduire de façon spectaculaire les formes paraboliques à déterminer.

\bs
\bs

\section{ -- From theory to practice}
\ms

Remember that ${\cal B}_{2k}(\Gamma_0(N)) = (E_{2k,N}^{(s)})_{0\ie s \ie d_{2k}(\Gamma_0(N))-1}$ denotes a $M_{2k}(\Gamma_0(N))$ unitary upper triangular basis while ${\cal C}_{2k}(\Gamma_0(N)) = (F_{2k,N}^{(s)})_{1\ie s \ie \delta_{2k}(\Gamma_0(N))}$ refers to a $S_{2k}(\Gamma_0(N))$ unitary upper triangular basis we are looking for.
\ms

Like Theorem I-7.3 for spaces $(M_{2k}(\Gamma_0(N)))_{k\in \N^*}$, Theorem III-\ref{ThmStruct102} enables one recursively or explicitly determining bases of $(S_{2k}(\Gamma_0(N)))_{k\in \N^*}$. Instead of redoing Part II work to get the bases $({\cal B}_{2k}(\Gamma_0(N)))_{n\in \N^*}$, we will rely on knowledge of these bases to determine bases $({\cal C}_{2k}(\Gamma_0(N)))_{n\in \N^*}$ when $1\ie N \ie 10$. This is the subject of the following result.
\ms

%Dans le but de déterminer une famille $({\cal C}_{2k}(\Gamma_0(N)))_{k\in \N^*}$, le théorème III-\ref{ThmStruct102} est effectif mais nécessite la connaissance des bases $({\cal C}_{2k}(\Gamma_0(N)))_{1\ie k\ie \frac{1}{2}\rho_N+1}$ et de $(F_{\rho_N+4,N}^{(s)})_{1\ie s \ie \nu(\Delta_N)}$.
%\bs

%Le résultat suivant nous sera utile pour les valeurs de $N$ étudiées par la suite.
%\ms

\begin{Lem}\label{LemP}
Let $k_0\in \N^*$ and $(F_{2k_0,N}^{(s)})_{1\ie s\ie \delta_{k_0}(N)}$ a $S_{2k_0}(\Gamma_0(N))$ unitary upper triangular basis. So
\be
\item[(i)]
For $1 \ie s \ie \delta_{k_0}(N)$, and an integer $k\se k_0$, $F_{2k_0,N}^{(s)}.{\cal B}_{2(k-k_0)}(\Gamma_0(N))$ is a linearly independent and unitary upper triangular family of $S_{2k}(\Gamma_0(N))$.
\item[(ii)]
For an integer $k \se k_0$, if $\dim(S_{2k}(\Gamma_0(N))) = \dim(M_{2(k-k_0)}(\Gamma_0(N))) + \dim(S_{2k_0}(\Gamma_0(N)))-1$, then $\pa{F_{2k_0,N}^{(u)}[E_{2,N}^{(0)}]^{k-k_0}, \ 1\ie u \ie \delta_{k_0}(N)-1}\cup F_{2k_0,N}^{(\delta_{k_0}(N))}.{\cal B}_{2(k-k_0)}(\Gamma_0(N))$ is a $S_{2k}(\Gamma_0(N))$ unitary upper triangular basis.

\item[(iii)]
If there is $k_0 \se 1$ for which the condition $(ii)$ applies to $k\in \{k_0+i, \ 1\ie i \ie 6\}$, then the result $(ii)$ is true for all $k \se k_0$.
We can then choose for any $k\se k_0$:

${\cal C}_{2k}(\Gamma_0(N)) = \pa{F_{2k_0,N}^{(u)}[E_{2,N}^{(0)}]^{k-k_0}, \ 1\ie u \ie \delta_{k_0}(N)-1}\cup F_{2k_0,N}^{(\delta_{k_0}(N))}.{\cal B}_{2(k-k_0)}(\Gamma_0(N))$.
\ee
\end{Lem}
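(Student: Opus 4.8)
The plan is to dispatch (i) and (ii) as direct consequences of the multiplicativity of the $q$-valuation and of the upper-triangular normalization, and to reduce (iii) to a periodicity statement about dimensions, which is where the real work lies.

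For (i) I would use two elementary facts about the valuation $\nu$ and the unitary (monic) normalization: for two forms $f,g$ one has $\nu(fg)=\nu(f)+\nu(g)$, and the leading coefficient of $fg$ is the product of the leading coefficients, so a product of unitary forms is unitary. Writing ${\cal B}_{2(k-k_0)}(\Gamma_0(N))=(E_{2(k-k_0),N}^{(j)})_{0\le j\le d_{2(k-k_0)}(N)-1}$ with strictly increasing valuations, the products $F_{2k_0,N}^{(s)}E_{2(k-k_0),N}^{(j)}$ all lie in $S_{2k}(\Gamma_0(N))$ (a cusp form vanishes at every cusp, hence so does its product with a holomorphic modular form), are unitary, and carry the pairwise distinct valuations $\nu(F_{2k_0,N}^{(s)})+\nu(E_{2(k-k_0),N}^{(j)})$. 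Distinct valuations force linear independence — inspect the term of least valuation in any hypothetical relation — and their increase gives the upper-triangular shape. This settles (i).

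For (ii) I would specialise (i) to $s=\delta_{k_0}(N)$: the family $F_{2k_0,N}^{(\delta_{k_0}(N))}.{\cal B}_{2(k-k_0)}(\Gamma_0(N))$ is then unitary, upper triangular and free, with valuations $\nu(F_{2k_0,N}^{(\delta_{k_0}(N))})+\nu(E_{2(k-k_0),N}^{(j)})$; since $\nu(E_{2(k-k_0),N}^{(0)})=0$ its least valuation is exactly $\nu(F_{2k_0,N}^{(\delta_{k_0}(N))})$. On the other hand each $F_{2k_0,N}^{(u)}[E_{2,N}^{(0)}]^{k-k_0}$, for $1\le u\le\delta_{k_0}(N)-1$, is a unitary weight-$2k$ cusp form of valuation $\nu(F_{2k_0,N}^{(u)})$, because $[E_{2,N}^{(0)}]^{k-k_0}$ is unitary of valuation $0$. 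The upper-triangularity of the original basis gives $\nu(F_{2k_0,N}^{(u)})\le\nu(F_{2k_0,N}^{(\delta_{k_0}(N)-1)})<\nu(F_{2k_0,N}^{(\delta_{k_0}(N))})$, so the valuations of these $\delta_{k_0}(N)-1$ forms all lie strictly below every valuation of the first sub-family. The two groups therefore have disjoint valuation sets, their union is unitary, upper triangular and free, and it has $(\delta_{k_0}(N)-1)+d_{2(k-k_0)}(N)$ elements; under the dimension hypothesis this equals $\dim(S_{2k}(\Gamma_0(N)))$, so the family is a basis.

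The crux is (iii), which I would treat as a periodicity argument on the defect $D(k)=\dim S_{2k}(\Gamma_0(N))-\dim M_{2(k-k_0)}(\Gamma_0(N))-\dim S_{2k_0}(\Gamma_0(N))+1$, condition (ii) holding at $k$ precisely when $D(k)=0$. Using the dimension formulae behind Theorem I-7.1, both $\dim S_{2k}$ and $\dim M_{2(k-k_0)}$ are, in the admissible range of weights, quasi-linear in $k$: a linear term plus a correction of period dividing $6$, the period coming from the floor contributions $\lfloor k/2\rfloor$ and $\lfloor 2k/3\rfloor$ of the elliptic points of orders $2$ and $3$, which are the only ones for $\Gamma_0(N)$. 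The linear-in-$k$ slopes coincide for $S$ and $M$ — they differ only by the weight-independent Eisenstein dimension and by the constant weight shift $2k_0$ — so the linear parts cancel in $D$, leaving $D$ itself periodic of period dividing $6$ on the range $k\ge k_0+1$ where the clean formulae apply (this is the same constant-increment phenomenon already visible in Lemma III-\ref{LemDim3}). By hypothesis $D$ vanishes at the six consecutive integers $k_0+1,\dots,k_0+6$, and a function of period dividing $6$ vanishing on six consecutive integers vanishes identically; hence $D(k)=0$ for all $k\ge k_0+1$. Since $D(k_0)=0$ trivially ($M_0(\Gamma_0(N))$ being one-dimensional), condition (ii) holds for every $k\ge k_0$, and the displayed family is the announced basis in each case. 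The one genuinely delicate point is this periodicity step: one must locate precisely the weights for which the quasi-linear formulae are exact — the weight-$2$ cusp form count and the weight-$0$ modular form count being the exceptional values, which is why the hypothesis starts at $k_0+1$ and $k=k_0$ is treated apart — and confirm that the linear-in-$k$ parts cancel so that only a bounded period-$6$ defect survives, the matching of that period with the six values $\{k_0+i:1\le i\le 6\}$ then closing the argument.
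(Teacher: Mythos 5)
Your proof is correct and follows essentially the same route as the paper: (i) from multiplicativity of the valuation and the monic normalization, (ii) by the independence-plus-cardinality count under the dimension hypothesis, and (iii) by observing that $k\mapsto \dim(S_{2k}(\Gamma_0(N)))-\dim(M_{2(k-k_0)}(\Gamma_0(N)))$ is $6$-periodic for $k>k_0$ by the standard dimension formulae, so vanishing of the defect at six consecutive integers forces it to vanish for all $k\ge k_0+1$. You are in fact more careful than the paper's terse proof on two points it leaves implicit — the disjointness of the valuation ranges of the two sub-families in (ii), and the separate trivial verification at $k=k_0$ via $\dim(M_0(\Gamma_0(N)))=1$ — which is a welcome sharpening, not a deviation.
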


\begin{proof} $ $

\be
\item[(i)]
The result comes directly from the properties of ${\cal B}_{2(k-k_0)}(\Gamma_0(N))$ basis.

\item[(ii)]
Family $\pa{F_{2k_0,N}^{(u)}[E_{2,N}^{(0)}]^{k-k_0}, \ 1\ie u \ie \delta_{k_0}(N)-1}\cup F_{2k_0,N}^{(\delta_0(N))}.{\cal B}_{2(k-k_0)}(\Gamma_0(N))$ is linearly independent and unitary upper triangular in $S_{2k}(\Gamma_0(N))$. It also contains exactly $(S_{2k}(\Gamma_0(N)))$ elements, so it is a space basis.

\item[(iii)]
For $k > k_0$ the $\varphi: k\mapsto \dim(S_{2k}(\Gamma_0(N))) - \dim(M_{2(k-k_0)}(\Gamma_0(N)))$ is $6$-periodic, it is a direct consequence of the formulae giving the modular space dimensions \cite{Stein}. The assumption $(iii)$ requires this function to be constant, starting from $k = k_0+1$. We can then apply $(ii)$ to any $k\se k_0$.
\ee
\end{proof}
\ms
We will systematically use this result for $1 \ie N \ie 10$. For $N$ other than $7$ and $10$, it will come $\delta_{k_0}(N) = 1$ whereas for $N = 7$ and $N = 10$, we will have $\delta_{k_0}(N) = 3$. Note that there are values of $N$ for which the situation $(iii)$ does not occur for any value of $k_0$, $N = 26$ for example.
\ms

Verification of the condition on dimensions indicated in $(ii)$ is possible thanks to the space dimension formulae reminded in Part I, and for the following paragraph, using SAGE.
\ms

Other approaches are possible. By noting, for example, that the function $\Delta(N\tau)$ belongs to $S_{12}(\Gamma_0(N))$. We will not use this remark in the following.
\bs
\bs

\section{ -- Structure and bases of $(S_{2k}(\Gamma_0(N)))_{k\in \N^*}$, $1\ie N \ie 10$}
\ms

For a given level $1\ie N \ie 10$, the application of Theorem III-\ref{ThmStruct102} is based on knowledge of the $((F_{2k,N}^{(s)})_{1\ie s \ie \delta(N)})$ for ${1\ie k\ie \frac{1}{2}\rho_N+1}$ and $(F_{\rho_N+4,N}{(s)})_{1\ie s \ie \nu(\Delta_N)}$.
\ms

The determination of these bases will be based on lemma III-\ref{LemP}. When cupsidal forms are missing to complete a base, we will remember that the space $S_{2k}(\Gamma_0(N))$ is included in $M_{2k}(\Gamma_0(N))$ of which we know a basis from \cite{FeauFM2}. An effective strategy will be to remind the bases of $M_{2k}(\Gamma_0(N)$ obtained for the first $k$ values then, thanks to a asymptotic expansion obtained by SAGE, we express the elements $F_{2k,N}^{(s)}$ searched in the known ${\cal B}_{2k}(\Gamma_0(N)) = (E_{2k,N}^{(s)})_{0\ie s \ie d_{2k}(\Gamma_0(N))-1}$ basis.
\bs

As was the case in Part II, we will try to produce bases expressing itself according to the function of Dedekind $\eta$ and, as often as possible, according to the elliptic functions $\wpa$ and $\twpa$, see \cite{FeauFM2}. The reference variable $\tau$ belongs to the Poincaré half plane ${\cal H}$ and we put $q = e^{2i\pi\tau}$.
%On constatera que ce résultat donne la forme générale des bases pour les espaces parabolique lorsque $1\ie N \ie 10$.

\subsection{ -- Structure and bases of $(S_{2k}(\Gamma_0(1)))_{k\in \N^*}$}
\ms

The strong modular unit that structures this set of spaces is $\Delta_1 = \Delta$.

Let us remind the first values of $\delta_{2k}(\Gamma_0(1))$:

\begin{center}
\begin{tabular}{c|c|c|c|c|c|c|c|c|c}
$2k$   & 2  & 4  & 6  & 8  & 10  & 12  & 14  & 16 & 18\\
\hline
$\delta_{2k}(\Gamma_0(1))$   & 0  & 0  & 0  & 0  & 0  & 1  & 0  & 1 & 1 \\
\end{tabular}
\end{center}

As a reminder, you can choose
\begin{equation*}
\left\{
\begin{array}{lcll}
\ds E_{2k,1}^{(0)} & = & E_4^{k/2} & \text{si} \ k\in 2\N^*\\
\ds E_{2k,1}^{(0)} & = & E_4^{(k-3)/2} E_6 & \text{si} \ k\in 2\N^*+1.
\end{array}
\right.
\end{equation*}
\ms

Remember that Eisenstein functions $E_4$ and $E_6$ are written according to $\twpa$.
\ms

For $k = 6q+r\se 2$, $1\ie r \ie 6$, a $M_{2k}(\Gamma_0(1))$ unitary upper triangular basis ${\cal B}_{2k}(\Gamma_0(1))$ is given by

\begin{equation*}
\left\{
\begin{array}{ll}
\ds (\Delta^n E_{2k-12n,1}^{(0)})_{0\ie n\ie q-1} & \text{if} \ r=1\\
\ds (\Delta^n E_{2k-12n,1}^{(0)})_{0\ie n\ie q} & \text{if} \ 2\ie r \ie 5\\
\ds (\Delta^{q+1})\cup(\Delta^n E_{2k-12n,1}^{(0)})_{0\ie n\ie q} & \text{if} \ r=6.
\end{array}
\right.
\end{equation*}
This basis is unitary, unitary upper triangular without jump and is expressed in terms of $\twpa$.
\bs

The generator for $S_{12}(\Gamma_0(1))$ is:
\begin{equation}
F_{12,1}^{(1)} = \Delta(\tau) = q\prod _{k=1}^{+\infty}(1-{q}^{k})^{24} = \frac{1}{256} \pa{\twpa\pa{\frac{1}{2},\tau}\twpa\pa{\frac{\tau}{2},\tau}\twpa\pa{\frac{\tau+1}{2},\tau+1}}^2.
\end{equation}

This result is obtained, for example, with the factorization of $\twpa$. We can also notice that the right member is in $M_{12}(\Gamma_0(1))$ and the first two terms of his asymptotic expansion coincide with those of $\Delta$.
\ms

For $k\in \llbracket 7,12\rrbracket$, $\dim(S_{2k}(\Gamma_0(1))) - \dim(M_{2(k-6)}(\Gamma_0(1))) = 0 = \dim(S_{12}(\Gamma_0(1)))-1$.

In application of lemma III-\ref{LemP}, for $k_0 = $6, the generic form of a $S_{2k}(\Gamma_0(1))$ unitary upper triangular basis ${\cal C}_{2k}(\Gamma_0(1))$ is:
\begin{equation*}
\forall k \se 6, \ \ {\cal C}_{2k}(\Gamma_0(1)) = F_{12,1}^{(1)}{\cal B}_{2k-12}(\Gamma_0(1)).
\end{equation*}
\ms
\bs

\subsection{-- Structure and bases of $(S_{2k}(\Gamma_0(2)))_{k\in \N^*}$}
\ms

The modular form that structures this set of spaces is defined as follows:

\begin{equation}
\Delta_2(\tau) =  \eta(2\tau)^{16} \eta(\tau)^{-8} = \frac{1}{256} {\twpa}(\frac{1}{2},\tau)^2 =  q+8q^2+28q^3+64q^4+O(q^5).
\end{equation}

Let us remind the first values of $\delta_{2k}(\Gamma_0(2))$:

\begin{center}
\begin{tabular}{ c|c|c|c|c|c|c|c|c|c|c}
$2k$   & 2  & 4  & 6  & 8  & 10  & 12  & 14  & 16 & 18\\
\hline
$\delta_{2k}(\Gamma_0(2))$ & 0  & 0  & 0  & 1  & 1   &  2  & 2   & 3 & 3\\
\end{tabular}
\end{center}

We determined a $M_{2}(\Gamma_0(2))$ unitary upper triangular basis: $\ds E_{2,2}^{(0)} = -3\wpa(\tau,2\tau)$.
\ms

We have a $M_{4}(\Gamma_0(2))$ unitary upper triangular basis:
\ms

$\ds E_{4,2}^{(0)}(\tau) = [E_{4,2}^{(0)}(\tau)]^2 = 9 {\wpa}(\tau,2\tau)^2 = 1+48q+624q^2+1344q^3+5232q^4 + O(q^5)$.

$\ds E_{4,2}^{(1)}(\tau) = \Delta_2(\tau) = \frac{1}{256} {\twpa}(\frac{1}{2},\tau)^2 = q+8q^2+28q^3+64q^4+ O(q^{5})$
\ms

This basis provides the generator of the first non-trivial cupsidal space, $S_8(\Gamma_0(2))$:

\[
\begin{array}{lcl}
F_{8,2}^{(1)} & = & [E_{4,2}^{(0)} - 64E_{4,2}^{(1)}]E_{4,2}^{(1)}\\
& = & \ds q \prod_{k=1}^{+\infty}(1-q^k)^8(1-q^{2k})^8\\
& = & q - 8q^2 + 12q^3 + 64q^4 - 210q^5 - 96q^6 + 1016q^7 + O(q^8))
\end{array}
\]
\bs

We also have the representation $F_{8,2}^{(1)} = \frac{1}{256}{\twpa}(\frac{1}{2},\tau)^2 {\twpa}(\tau,2\tau)^2$, which justifies the previous factorized form.
Indeed, both components of the product are in $M_4(\Gamma_0(2))$, cusps behavior is easily verified with Weierstrass representation. As a $\eta$-product, this result is well known elsewhere, see \cite{Kohl} and \cite{Ono} for example.
\bs

For $k\in \llbracket 5,10\rrbracket$, $\dim(S_{2k}(\Gamma_0(2))) - \dim(M_{2(k-4)}(\Gamma_0(2))) = 0 = \dim(S_{8}(\Gamma_0(2)))-1$.

In application of lemma III-\ref{LemP}, for $k_0 = 4$, the generic form of a $S_{2k}(\Gamma_0(2))$ unitary upper triangular basis ${\cal C}_{2k}(\Gamma_0(2))$ is:

\begin{equation*}
\forall k\se 4, \ \ {\cal C}_{2k}(\Gamma_0(2)) = F_{8,2}^{(1)}{\cal B}_{2k-8}(\Gamma_0(2)).
\end{equation*}
\bs

\subsection{-- Structure and bases of $(S_{2k}(\Gamma_0(3)))_{k\in \N^*}$}
\ms

The modular form $\Delta_3\in M_{6}(\Gamma_0(3))$ that structures this set of spaces is:

\begin{center}
\begin{tabular}{lcl}
$\Delta_3(\tau)$ & $=$ & $\ds \eta(3\tau)^{18} \eta(\tau)^{-6}$\\
  & $=$ & $\ds {q}^{2}\prod _{k=1}^{+\infty} \frac{(1-{q}^{3k})^{18}}{(1-{q}^{k})^{6}}$\\
  & $=$ & $\ds  {q}^{2}+6\,{q}^{3}+27\,{q}^{4}+80\,{q}^{5}+207\,{q}^{6}+432\,{q}^{7}+
863\,{q}^{8}+1512\,{q}^{9}+O(q^{10})$\\
\end{tabular}
\end{center}
\ms

For $k\in \N^*$, let us note $(F_{2k,3}^{(r)})_{1\ie r \ie \delta_{2k}(\Gamma_0(3))}$ a $S_{2k}(\Gamma_0(3))$ unitary upper triangular basis. Let us remind the first values of $\delta_{2k}(\Gamma_0(3))$:

\begin{center}
\begin{tabular}{ c|c|c|c|c|c|c|c|c|c|c}
$2k$   & 2  & 4  & 6  & 8  & 10  & 12  & 14  & 16 & 18 & 20\\
\hline
$\delta_{2k}(\Gamma_0(3))$ & 0  & 0  & 1  & 1  & 2   &  3  & 3   & 4 & 5 & 5 \\
\end{tabular}
\end{center}

The unitary generator of $S_{6}(\Gamma_0(3))$ must be in $M_{6}(\Gamma_0(3))$ and thanks to the asymptotic expansion of $F_{6,3}^{(1)}$ given by SAGE, we get:
\be
\item[]
\begin{tabular}{lcl}
$F_{6,3}^{(1)}(\tau) $ & = & $E_{6,3}^{(1)} - 27E_{6,3}^{(2)}$.
\end{tabular}
\ee

For $k\in \llbracket 4,9\rrbracket$, $\dim(S_{2k}(\Gamma_0(3))) - \dim(M_{2(k-3)}(\Gamma_0(3))) = 0 = \dim(S_{6}(\Gamma_0(3)))-1$.

In application of lemma III-\ref{LemP}, for $k_0 = $3, the generic form of a $S_{2k}(\Gamma_0(3))$ unitary upper triangular basis ${\cal C}_{2k}(\Gamma_0(3))$ is:
\begin{equation*}
\forall k\se 3, \ \ {\cal C}_{2k}(\Gamma_0(3)) = F_{6,3}^{(1)}{\cal B}_{2k-6}(\Gamma_0(3)).
\end{equation*}
\bs
\ms

\subsection{-- Structure and bases of $(S_{2k}(\Gamma_0(4)))_{k\in \N^*}$}
\ms

The modular form that structures this set of spaces is $\Delta_4$ defined as follows:

\begin{equation*}
\Delta_4(\tau) = \ds \eta(4\tau)^{8} \eta(2\tau)^{-4} = \ds -\frac{1}{16} {\twpa}(\frac{1}{2},2\tau)\in M_{2}(\Gamma_0(4)).
\end{equation*}

The first dimension table of $(S_{2k}(\Gamma_0(4)))_{k\in \N^*}$:
\ms

\begin{center}
\begin{tabular}{ c|c|c|c|c|c|c|c|c}
$2k$   & 2  & 4  & 6  & 8  & 10  & 12  & 14  & 16 \\
\hline
$\delta_{2k}(\Gamma_0(4))$    & 0  & 0  & 1  & 2  & 3  & 4  & 5  & 6 \\
\end{tabular}
\end{center}

SAGE indicates $F_{4,6}^{(1)}(\tau) = q-12\,{q}^{3}+54\,{q}^{5}-88\,{q}^{7}+O \left( {q}^{8} \right)$.
\ms

One poses

\begin{tabular}{lcl}
$E_{2,4}^{(0)}$ & = & $\ds {\twpa}(\tau,2\tau)$\\
								& = & $\ds \prod_{k=1}^{+\infty} \frac{(1-{q}^{n})^{8}}{(1-{q}^{2n})^{4}}$\\
                & = & $1-8q+24q^2+32q^3+24q^4-48q^5+96q^6+O(q^7).$
\end{tabular}

\begin{tabular}{lcl}
$E_{2,4}^{(1)}$ & = & $\Delta_4 = \ds -\frac{1}{16} {\twpa}(\frac{1}{2},2\tau)$\\
  & = & $\ds q\prod_{k=1}^{+\infty} \frac{(1-{q}^{4n})^{8}}{(1-{q}^{2n})^{4}}$\\
                & = & $\ds q+4q^3+6q^5+8q^7+13q^9+O(q^{11}).$
\end{tabular}
\ms

A $M_{2k}(\Gamma_0(4))$ unitary upper triangular basis is given by:

\[{\cal B}_{2k}(\Gamma_0(4)) = \pa{[E_{2,4}^{(0)}]^a [E_{2,4}^{(1)}]^b, \ {\rm with} \ (a,b)\in\N^2 \ {\rm such \ that} \ a+b = k}.\]

Thus ${\cal B}_{6}(\Gamma_0(4)) = (E_{4,6}^{(0)}, \ E_{4,6}^{(1)}, \ E_{4,6}^{(2)}, \ E_{4,6}^{(3)}) = ([E_{2,4}^{(0)}]^3, \ [E_{2,4}^{(0)}]^2[E_{2,4}^{(0)}], \ [E_{2,4}^{(0)}][E_{2,4}^{(0)}]^2, \ [E_{2,4}^{(0)}]^3)$ which identifies the unitary generator of $S_{6}(\Gamma_0(4))$:

\begin{tabular}{lcl}
$F_{6,4}^{(1)}$ & = & $\ds E_{6,4}^{(1)} + 16E_{6,4}^{(2)}$\\
  & = & $\ds E_{2,4}^{(0)}E_{2,4}^{(1)}[E_{2,4}^{(0)}+16E_{2,4}^{(1)}]$\\
                & = & $\ds q-12q^3+54q^5-88q^7-99q^9+O(q^{11}).$
\end{tabular}
\ms

For $k\in \llbracket 4,9 \rrbracket$, $\dim(S_{2k}(\Gamma_0(4))) - \dim(M_{2(k-3)}(\Gamma_0(4))) = 0 = \dim(S_{6}(\Gamma_0(4)))-1$.

In application of lemma III-\ref{LemP}, for $k_0 = $3, the generic form of a $S_{2k}(\Gamma_0(4))$ unitary upper triangular basis ${\cal C}_{2k}(\Gamma_0(4))$ is:

\begin{equation*}
\begin{array}{lcl}
\forall k \se 3, \ \ {\cal C}_{2k}(\Gamma_0(4)) & = & F_{6,4}^{(1)}{\cal B}_{2k-6}(\Gamma_0(4))
\end{array}
\end{equation*}

\subsection{-- Structure and bases of $(M_{2k}(\Gamma_0(5)))_{k\in \N^*}$}
\ms

The strong modular unit that structures the modular spaces is:
\[\Delta_5(\tau) = \eta(5\tau)^{10}\eta(\tau)^{-2} = \frac{1}{16} ({\wpa}(\tau,5\tau) - {\wpa}(2\tau,5\tau))^2 = {q}^{2}\prod _{n=1}^{+\infty}{\frac { \left( 1-{q}^{5n}
 \right) ^{10}}{ \left( 1-{q}^{n} \right) ^{2}}}\in M_{4}(\Gamma_0(5)).\]

The table of the dimensions of the first spaces:

\begin{center}
\begin{tabular}{ c|c|c|c|c|c|c|c|c}
$2k$   & 2  & 4  & 6  & 8  & 10  & 12  & 14  & 16 \\
\hline
$\delta_{2k}(\Gamma_0(5))$    & 0  & 1  & 1  & 3  & 3  & 5  & 5  & 7 \\
\end{tabular}
\end{center}

We remind the functions generating first spaces $M_{2k}(\Gamma_0(5))$.
\ms

The standard $M_{2}(\Gamma_0(5))$ generator

\begin{tabular}{lcl}
$E_{2,5}^{(0)}$ & = & $\ds -\frac{3}{2} ({\wpa}(\tau,5\tau) + {\wpa}(2\tau,5\tau))$
\end{tabular}
\ms

A $M_{4}(\Gamma_0(5))$ unitary upper triangular basis
\ms

\begin{tabular}{lclcl}
$E_{4,5}^{(0)}(\tau)$ & = & $[E_{2,5}^{(0)}]^2$ & = & $\ds \frac{9}{4} \pa{{\wpa}(\tau,5\tau) + {\wpa}(2\tau,5\tau)}^2$
\end{tabular}
\ms

\begin{tabular}{lcl}
$E_{4,5}^{(1)}(\tau)$ & = & $\ds \frac{1}{48} \pa{9 \pa{{\wpa}(\tau,5\tau) + {\wpa}(2\tau,5\tau)}^2 -12 \pa{{\wpa}(\frac{1}{2},5\tau)^2 + {\wpa}(\frac{5\tau}{2},5\tau)^2 + {\wpa}(\frac{1}{2},5\tau) {\wpa}(\frac{5\tau}{2},5\tau)}}$
\end{tabular}
\ms

\begin{tabular}{lclcl}
$E_{4,5}^{(2)}(\tau)$ & = & $\Delta_5(\tau)$
 & = & $\ds \frac{1}{16} ({\wpa}(\tau,5\tau) - {\wpa}(2\tau,5\tau))^2$
\end{tabular}
\ms

The asymptotic expansion of the unitary generator of $S_{4}(\Gamma_0(5))$ given by SAGE ($F_{4,5}^{(1)} = q - 4q^2 + O(q^3)$) enables one to identify thanks to the previous basis:
\[F_{4,5}^{(1)} = E_{4,5}^{(1)} - 10 E_{4,5}^{(2)}.\]

For $k\in \llbracket 3,8\rrbracket$, $\dim(S_{2k}(\Gamma_0(5))) - \dim(M_{2(k-2)}(\Gamma_0(5))) = 0 = \dim(S_{4}(\Gamma_0(5)))-1$.

In application of lemma III-\ref{LemP}, for $k_0 = 2$, the generic form of a $S_{2k}(\Gamma_0(5))$ unitary upper triangular basis ${\cal C}_{2k}(\Gamma_0(5))$ is:
\begin{equation*}
\forall k\se 2, \ \ {\cal C}_{2k}(\Gamma_0(5)) = F_{4,5}^{(1)}{\cal B}_{2k-4}(\Gamma_0(5)).
\end{equation*}
\bs
\ms

\subsection{-- Structure and bases of $(M_{2k}(\Gamma_0(6)))_{k\in \N^*}$}
\ms
%On trouve dans Test $M_2(G_0(3))$ et $M_2(G_0(5))$ - 15 juillet 2015 la fonction suivante:

%\[\pa{{\pi^2\twpa}(\frac{1}{2},\tau){\pi^2\twpa}(\frac{\tau}{2},\tau){\pi^2\twpa}(\frac{\tau+1}{2},\tau+1)}^2 \in S_{12}(\Gamma_0(6).\]

We know that
\begin{equation}
\begin{array}{lcl}
\Delta_6(\tau) &  = & \ds \frac{\eta(\tau)^2 \eta(6\tau)^{12}}{\eta(2\tau)^4 \eta(3\tau)^6} \in M_{2}(\Gamma_0(6))\\
& = & \ds q^2\prod_{k=1}^{+\infty} \frac{(1-q^{k})^{2}(1-q^{6k})^{12}} {(1-q^{2k})^{4} (1-q^{3k})^{6}}
\end{array}
\end{equation}
\ms

The dimensions of the first spaces $(M_{2k}(\Gamma_0(6)))_{k\in \N^*}$ are shown in the following table:

\begin{center}
\begin{tabular}{ c|c|c|c|c|c|c|c|c}
$2k$   & 2  & 4  & 6  & 8  & 10  & 12  & 14  & 16 \\
\hline
$\delta_{2k}(\Gamma_0(6))$   & 0  & 1  & 3  & 5  & 7  & 9  & 11  & 13 \\
\end{tabular}
\end{center}
\bs

We remind the $M_{2}(\Gamma_0(6))$ unitary upper triangular basis obtained in Part II:
\ms

\begin{tabular}{lcl}
$E_{2,6}^{(0)}(\tau)$ & = & $\ds -3{\wpa}(\tau,2\tau)$\\
                & = & $1+24q+24q^2+96q^3+24q^4+144q^5+96q^6+192*q^7+O(q^8)$
\end{tabular}

\begin{tabular}{lcl}
$E_{2,6}^{(1)}(\tau)$ & = & $\ds -\frac{1}{4}\pa{{\wpa}(\tau,2\tau) - {\wpa}(\tau,3\tau)}$\\
& = & $\ds q-q^2+7q^3-5q^4+6q^5+5q^6+8q^7 + O(q^8)$
\end{tabular}

\begin{tabular}{lcl}
$E_{2,6}^{(2)}(\tau)$ & = & $\Delta_6(\tau) = \ds q^2\prod_{k=1}^{+\infty} \frac{(1-q^{k})^{2}(1-q^{6k})^{12}} {(1-q^{2k})^{4} (1-q^{3k})^{6}} =  \frac{\eta(\tau)^2 \eta(6\tau)^{12}}{\eta(2\tau)^4 \eta(3\tau)^6}$\\
                &  = & $\ds \frac{1}{48} \pa{3\wpa(\tau,2\tau) - 8\wpa(\tau,3\tau) + \sum_{k=1}^5 {\wpa}(k\tau,6\tau)}$\\
                & = & $q^2-2q^3+3q^4-q^6+7q^8-8q^9+6q^{10}+O(q^{11})$
\end{tabular}
\bs

The asymptotic expansion of the $S_{4}(\Gamma_0(6))$ unitary generator enables one to identify thanks to the previous basis:
\[F_{4,6}^{(1)} = E_{4,6}^{(1)} - 25 E_{4,6}^{(2)} + 540 E_{4,6}^{(3)} + 864 E_{4,6}^{(4)}.\]
\ms

For $k\in \llbracket 3,8\rrbracket$, $\dim(S_{2k}(\Gamma_0(6))) - \dim(M_{2(k-2)}(\Gamma_0(6))) = 0 = \dim(S_{4}(\Gamma_0(6)))-1$.

In application of lemma III-\ref{LemP}, for $k_0 = 2$, the generic form of a $S_{2k}(\Gamma_0(6))$ unitary upper triangular basis ${\cal C}_{2k}(\Gamma_0(6))$ is:

\[\forall k\se 2, \ \ {\cal C}_{2k}(\Gamma_0(6)) = F_{4,6}^{(1)}{\cal B}_{2k-4}(\Gamma_0(6))\]

\bs
\ms

\subsection{-- Structure and bases of $(M_{2k}(\Gamma_0(7)))_{k\in \N^*}$}
\ms

The strong modular unit of level $7$ is:
\[\Delta_7(\tau) = \eta(\tau)^{-2}\eta(7\tau)^{14} = q^4 \prod_{n=1}^{+\infty} \frac{(1-q^{7n})^{14}}{(1-q^n)^2} \in M_6(\Gamma_0(7)).\]
\ms

The dimensions of the first spaces $(M_{2k}(\Gamma_0(7)))_{k\in \N^*}$ are given in the following table:

\begin{center}
\begin{tabular}{ c|c|c|c|c|c|c|c|c}
$2k$   & 2  & 4  & 6  & 8  & 10  & 12  & 14  & 16 \\
\hline
$\delta_{2k}(\Gamma_0(7))$   & 0  & 1  & 3  & 3  & 5  & 7  & 7  & 9 \\
\end{tabular}
\end{center}
\bs

A generator of $M_{2}(\Gamma_0(7))$ is given by the generic formula:

\begin{tabular}{lcl}
$E_{2,7}^{(0)}(\tau)$ & = & $\ds - \sum_{k=1}^3 {\wpa}(k\tau,7\tau)$
\end{tabular}
\ms

We remind a $M_{4}(\Gamma_0(7))$ unitary upper triangular basis:

\begin{tabular}{lcl}
$E_{4,7}^{(0)}(\tau)$ & = & $\ds [E_{2,7}^{(0)}]^2 = \pa{\sum_{k=1}^3 {\wpa}(k\tau,7\tau)}^2$
\end{tabular}

\begin{tabular}{lcl}
$E_{4,7}^{(1)}(\tau)$ & = & $\ds \frac{1}{8} \pa{\pa{\sum_{k=1}^3 {\wpa}(k\tau,7\tau)}^2 - 3 \pa{{\wpa}(\frac{1}{2},7\tau)^2 + {\wpa}(\frac{7\tau}{2},7\tau)^2 + {\wpa}(\frac{1}{2},7\tau) {\wpa}(\frac{7\tau}{2},7\tau)}}$
\end{tabular}

\begin{tabular}{lcl}
$E_{4,7}^{(2)}(\tau)$ & = & $\ds \frac{1}{32} \pa{3\sum_{k=1}^3 {\wpa}(k\tau,7\tau)^2 - \pa{\sum_{k=1}^3 {\wpa}(k\tau,7\tau)}^2}$
\end{tabular}
\ms

The asymptotic expansion of the $S_{4}(\Gamma_0(7))$ unitary generator enables one to identify thanks to the previous database:
\[F_{4,7}^{(1)} = E_{4,7}^{(1)} - 6 E_{4,7}^{(2)}.\]

We could hope to apply lemma III-\ref{LemP} as before. But if the equality ${\cal C}_{2k}(\Gamma_0(7)) = F_{4,7}^{(1)}{\cal B}_{2k-4}(\Gamma_0(7))$ is true when $k$ is not a multiple of $3$, it becomes false when $3$ divides $k$: the space  dimensions indicates that two items are missing from the ${\cal C}_{2k}(\Gamma_0(7))$ basis.
\ms

We must then go back to Theorem I-8.2. which requires knowing a $S_{6}(\Gamma_0(7))$ basis, and for this we remind the $M_{6}(\Gamma_0(7))$ unitary upper triangular basis found in Part II.
\ms

\begin{tabular}{lcl}
$E_{6,7}^{(0)}(\tau)$ & = & $\ds [E_{2,7}^{(0)}]^3$\\
                & = & $1+12q+84q^2+400q^3+1476q^4+4392q^5+11184q^6+24780q^7+49668q^8+O(q^9)$
\end{tabular}
\ms

\begin{tabular}{lcl}
$E_{6,7}^{(1)}(\tau)$ & = & $\ds E_{2,7}^{(0)}E_{4,7}^{(1)}$\\
                & = & $q+9*q^2+48*q^3+181*q^4+546*q^5+1392*q^6+3067*q^7+6081*q^8+O(q^9)$
\end{tabular}
\ms

\begin{tabular}{lcl}
$E_{6,7}^{(2)}(\tau)$ & = & $\ds E_{2,7}^{(0)}E_{4,7}^{(2)}$\\
                & = & $q^2+7q^3+32q^4+95q^5+241q^6+503q^7+1017q^8+O(q^9)$
\end{tabular}
\ms

\begin{tabular}{lcl}
$E_{6,7}^{(3)}(\tau)$ & = & $\ds -\frac{1}{128} (2\wpa(\tau,7\tau) - \wpa(2\tau,7\tau) - \wpa(3\tau,7\tau))(2\wpa(2\tau,7\tau) - \wpa(\tau,7\tau) - \wpa(3\tau,7\tau))$\\
& & {\hskip 7cm} $(2\wpa(3\tau,7\tau) - \wpa(\tau,7\tau) - \wpa(2\tau,7\tau))$
\end{tabular}
\ms

\begin{tabular}{lcl}
$E_{6,7}^{(4)}(\tau)$ & = & $\ds \Delta_7(\tau) = \ds q^4 \prod_{n=1}^{+\infty} \frac{(1-q^{7n})^{14}}{(1-q^n)^2}$
\end{tabular}
\ms

Asymptotic expansions from a $S_{4}(\Gamma_0(7))$ basis found by SAGE, and the $F_{6,7}^{(1)}$ element easily accessible lead to the following basis:

\[F_{6,7}^{(1)} = F_{4,7}^{(1)} E_{2,7}^{(0)} = E_{6,7}^{(1)} - 6E_{2,7}^{(2)}\]

\[F_{6,7}^{(2)} = E_{6,7}^{(2)} - 49E_{2,7}^{(4)}\]

\[F_{6,7}^{(3)} = E_{6,7}^{(3)} - \frac{13}{2}E_{2,7}^{(4)}\]

\ms

For $k\in \llbracket 4,9\rrbracket$, $\dim(S_{2k}(\Gamma_0(7))) - \dim(M_{2(k-2)}(\Gamma_0(7))) = 2 = \dim(S_{4}(\Gamma_0(7)))-1$.

In application of lemma III-\ref{LemP}, for $k_0 = $3, the generic form of a $S_{2k}(\Gamma_0(7))$ unitary upper triangular basis ${\cal C}_{2k}(\Gamma_0(7))$ is:

\[\forall k\se 3, \ \ {\cal C}_{2k}(\Gamma_0(7)) = (F_{6,7}^{(1)}[E_{2,7}^{(0)}]^{k-3}, \ F_{6,7}^{(2)}[E_{2,7}^{(0)}]^{k-3}) \cup F_{6,7}^{(3)}{\cal B}_{2k-6}(\Gamma_0(7)).\] 

Finally, for $k\se 1$, thanks to arguments similar to those developed in lemma III-\ref{LemP}, we can also give the generic form of a $S_{2k}(\Gamma_0(7))$ unitary upper triangular basis ${\cal C}_{6k+2}(\Gamma_0(7))$ in the form:

\[{\cal C}_{6k+2}(\Gamma_0(7)) = F_{4,7}^{(1)}{\cal B}_{6k-2}(\Gamma_0(7))\]

\[{\cal C}_{6k+4}(\Gamma_0(7)) = F_{4,7}^{(1)}{\cal B}_{6k}(\Gamma_0(7))\]

\[{\cal C}_{6k+6}(\Gamma_0(7)) = (F_{6,7}^{(1)}[E_{2,7}^{(0)}]^{3k}, \ F_{6,7}^{(2)}[E_{2,7}^{(0)}]^{3k}) \cup F_{6,7}^{(3)}{\cal B}_{6k}(\Gamma_0(7)).\]

\bs
\ms

\subsection{-- Structure and bases of $(M_{2k}(\Gamma_0(8)))_{k\in \N^*}$}
\ms

The dimensions of the first spaces $(S_{2k}(\Gamma_0(8)))_{k\in \N^*}$ are as follows:

\begin{center}
\begin{tabular}{ c|c|c|c|c|c|c|c|c}
$2k$   & 2  & 4  & 6  & 8  & 10  & 12  & 14  & 16 \\
\hline
$\delta_{2k}(\Gamma_0(8))$   & 0  & 1  & 3  & 5  & 7  & 9  & 11  & 13 \\
\end{tabular}
\end{center}

The structure here is very simple since $\Delta_8(\tau) = \Delta_4(2\tau)\in M_{2}(\Gamma_0(8))$.
\ms

A $M_{2}(\Gamma_0(8))$ unitary upper triangular basis consisting of modular units (not strong, except one):
\ms

\begin{tabular}{lcl}
$E_{2,8}^{(0)}(\tau)$ & = &  $\ds {\twpa}(\tau,2\tau)$\\
& = & $\ds \eta(\tau)^8\eta(2\tau)^{-4} = \prod_{k=1}^{+\infty} \frac{(1-{q}^{n})^{8}}{(1-{q}^{2n})^{4}}$\\
& = & $1-8q+24q^2-32q^3+24q^4-48q^5+96q^6-64q^7+24q^8+O(q^9)$.
\end{tabular}
\ms

\begin{tabular}{lcl}
$E_{2,8}^{(1)}(\tau)$ & = & $\ds -\frac{1}{16} {\twpa}(\frac{1}{2},2\tau)$\\
								& = & $\ds \Delta_4(\tau) = \eta(2\tau)^{-4} \eta(4\tau)^{8} = q\prod_{k=1}^{+\infty} \frac{(1-{q}^{4n})^{8}}{(1-{q}^{2n})^{4}}$\\
                & = & $\ds q+4q^3+6q^5+8q^7+13q^9+O(q^{10})$.
\end{tabular}

\begin{tabular}{lcl}
$E_{2,8}^{(2)}(\tau)$ & = & $\ds -\frac{1}{16} {\twpa}(\frac{1}{2},4\tau)$\\
                & = & $ \ds \Delta_8(\tau) = \eta(4\tau)^{-4} \eta(8\tau)^{8} = q^2\prod_{k=1}^{+\infty} \frac{(1-{q}^{8n})^{8}}{(1-{q}^{4n})^{4}}$\\
                & = & $q^2+4q^6+O(q^{10})$.
\end{tabular}
\ms

We also need a $M_{4}(\Gamma_0(8))$ basis:
\[{\cal B}_4(\Gamma_0(8)) = (E_{4,8}^{(0)}, \ E_{4,8}^{(1)}, \ E_{4,8}^{(2)}, \ E_{4,8}^{(3)}, \ E_{4,8}^{(4)}) = \pa{[E_{2,8}^{(0)}]^2, \ E_{2,8}^{(0)}E_{2,8}^{(1)}, \ E_{2,8}^{(0)}E_{2,8}^{(2)}, \ E_{2,8}^{(1)}E_{2,8}^{(2)}, \ E_{2,8}^{(2)}E_{2,8}^{(2)}}.\]

Still by an asymptotic expansion, we obtain the unitary generator of $S_{4}(\Gamma_0(8))$:
\[F_{4,8}^{(1)} = E_{4,8}^{(1)} + 8E_{4,8}^{(2)} + 32E_{4,8}^{(3)} - 128E_{4,8}^{(4)}.\]

For $k\in \llbracket 3,8\rrbracket$, $\dim(S_{2k}(\Gamma_0(8))) - \dim(M_{2(k-2)}(\Gamma_0(8))) = 0 = \dim(S_{4}(\Gamma_0(8)))-1$.

In application of lemma III-\ref{LemP}, for $k_0 = 2$, the generic form of a $S_{2k}(\Gamma_0(8))$ unitary upper triangular basis ${\cal C}_{2k}(\Gamma_0(8))$ is:
\[\forall k \se 2, \ \ {\cal C}_{2k}(\Gamma_0(8)) = F_{4,8}^{(1)}{\cal B}_{2k-4}(\Gamma_0(8)).\]

\bs
\ms

\subsection{-- Structure and bases of $(M_{2k}(\Gamma_0(9)))_{k\in \N^*}$}
\ms

Let us remind the first values of $\delta_{2k}(\Gamma_0(9))$:

\begin{center}
\begin{tabular}{ c|c|c|c|c|c|c|c|c}
$2k$   & 2  & 4  & 6  & 8  & 10  & 12  & 14  & 16 \\
\hline
$\delta_{2k}(\Gamma_0(9))$ & 0  & 1  & 3  & 5  & 7   &  9  & 11   & 13  \\
\end{tabular}
\end{center}

A $M_{2}(\Gamma_0(9))$ unitary upper triangular basis:
\ms

\begin{tabular}{lcl}
$E_{2,9}^{(0)}$ & = & $\ds 3 {\wpa}(3\tau,9\tau)$\\
                & = & $\ds 1+12q^3+36q^6+12q^9+84q^{12}+72q^{15}+O(q^{18})$
\end{tabular}
\ms

\begin{tabular}{lcl}
$E_{2,9}^{(1)}$ & = & $-\ds \frac{1}{4} \pa{{\wpa}(\tau,3\tau)-{\wpa}(3\tau,9\tau)}$\\
                & = & $\ds q+3q^2+7q^4+6q^5+8q^7+15q^8+O(q^{10})$
\end{tabular}

\begin{tabular}{lcl}
$E_{2,9}^{(2)}$ & = & $\Delta_9(\tau) = \ds \eta(9\tau)^{6} \eta(3\tau)^{-2}$\\
  & $=$ & $\ds {q}^{2}\prod_{k=1}^{+\infty}{\frac { \left( 1-{q}^{9k} \right) ^{6}}{ \left( 1-{q}^{3k} \right) ^{2}}}$\\
  & $=$ & $\ds q^2+2q^5+5q^8+4q^{11}+8q^{14} + O(q^{17})$
\end{tabular}
\ms

We also need a $M_{4}(\Gamma_0(9))$ unitary upper triangular basis given by:
\[{\cal B}_4(\Gamma_0(9)) = (E_{4,9}^{(0)}, \ E_{4,9}^{(1)}, \ E_{4,9}^{(2)}, \ E_{4,9}^{(3)}, \ E_{4,9}^{(4)}) = \pa{[E_{2,9}^{(0)}]^2, \ E_{2,9}^{(0)}E_{2,9}^{(1)}, \ E_{2,9}^{(0)}E_{2,9}^{(2)}, \ E_{2,9}^{(1)}E_{2,9}^{(2)}, \ E_{2,9}^{(2)}E_{2,9}^{(2)}}.\]

Still by an asymptotic expansion, we obtain the unitary generator of $S_{4}(\Gamma_0(9))$:
\[F_{4,9}^{(1)} = E_{4,9}^{(1)} - 3E_{4,8}^{(2)} - 27E_{4,8}^{(4)}.\]

For $k\in \llbracket 3,8\rrbracket$, $\dim(S_{2k}(\Gamma_0(9))) - \dim(M_{2(k-2)}(\Gamma_0(9))) = 0 = \dim(S_{4}(\Gamma_0(9)))-1$.

In application of lemma III-\ref{LemP}, for $k_0 = 2$, the generic form of a $S_{2k}(\Gamma_0(9))$ unitary upper triangular basis ${\cal C}_{2k}(\Gamma_0(9))$ is:

\[\forall k \se 2, \ \ {\cal C}_{2k}(\Gamma_0(9)) = F_{4,9}^{(1)}{\cal B}_{2k-4}(\Gamma_0(9)).\]

\bs
\ms

\subsection{-- Structure and bases of $(M_{2k}(\Gamma_0(10)))_{k\in \N^*}$}
\ms

Let us remind the first values of $\delta_{2k}(\Gamma_0(10))$:

\begin{center}
\begin{tabular}{ c|c|c|c|c|c|c|c|c}
$2k$   & 2  & 4  & 6  & 8  & 10  & 12  & 14  & 16 \\
\hline
$\delta_{2k}(\Gamma_0(10))$ & 0  & 3  & 5  & 9  & 11   &  15  & 17  & 21  \\
\end{tabular}
\end{center}

We get the following $M_{2}(\Gamma_0(10))$ unitary upper triangular basis.
\ms

\begin{tabular}{lcl}
$E_{2,10}^{(0)}$ & = & $-\ds 3 {\wpa}(5\tau,10\tau)$
\end{tabular}
\ms

\begin{tabular}{lcl}
$E_{2,10}^{(1)}$ & = & $-\ds \frac{1}{8}\pa{{\wpa}(\tau,2\tau)-{\wpa}(5\tau,10\tau)}$
\end{tabular}
\ms

\begin{tabular}{lcl}
$E_{2,10}^{(2)}$ & = & $\ds \frac{1}{16}\pa{{\wpa}(\tau,2\tau)-2{\wpa}(\tau,5\tau)-2{\wpa}(2\tau,5\tau)+3{\wpa}(5\tau,10\tau)}$
\end{tabular}
\bs

We also need a $M_{4}(\Gamma_0(10))$ unitary upper triangular basis.

\begin{tabular}{lcl}
$E_{4,10}^{(0)}$ & = & $[E_{2,10}^{(0)}]^2 $
\end{tabular}

\begin{tabular}{lcl}
$E_{4,10}^{(1)}$ & = & $E_{2,10}^{(0)} E_{2,10}^{(1)} $
\end{tabular}

\begin{tabular}{lcl}
$E_{4,10}^{(2)}$ & = & $ E_{2,10}^{(0)}E_{2,10}^{(2)} $
\end{tabular}

\begin{tabular}{lcl}
$E_{4,10}^{(3)}$ & = & $ E_{2,10}^{(1)}E_{2,10}^{(2)} $
\end{tabular}

\begin{tabular}{lcl}
$E_{4,10}^{(4)}$ & = & $ [E_{2,10}^{(2)}]^2$
\end{tabular}

\begin{tabular}{lcl}
$E_{4,10}^{(5)}$ & = & $\Delta_2(5\tau) = \ds \frac{1}{256} {\twpa}(\frac{1}{2},5\tau)^2$
\end{tabular}

\begin{tabular}{lcl}
$E_{4,10}^{(6)}$ & = & $\Delta_{10}(\tau) = \ds \eta(\tau)^2 \eta(2\tau)^{-4} \eta(5\tau)^{-10} \eta(10\tau)^{20}$
\end{tabular}
\bs

We then obtain thanks to the identification of the asymptotic expansions:

\[F_{4,10}^{(1)} = E_{4,10}^{(1)} - E_{4,10}^{(2)} - 4E_{4,10}^{(3)} + 2E_{4,10}^{(4)} + 16E_{4,10}^{(5)} - 40E_{4,10}^{(6)}\]

\[F_{4,10}^{(2)} = E_{4,10}^{(2)} - 7E_{4,10}^{(4)} + 4E_{4,10}^{(5)} +  40E_{4,10}^{(6)}\]

\[F_{4,10}^{(3)} = E_{4,10}^{(3)} - 3E_{4,10}^{(4)} - 8E_{4,10}^{(5)} +  20E_{4,10}^{(6)}\]

For $k\in \llbracket 3,8\rrbracket$, $\dim(S_{2k}(\Gamma_0(10))) - \dim(M_{2(k-2)}(\Gamma_0(10))) = 2 = \dim(S_{4}(\Gamma_0(10)))-1$.

In application of lemma III-\ref{LemP}, for $k_0 = 2$, the generic form of a $S_{2k}(\Gamma_0(10))$ unitary upper triangular basis ${\cal C}_{2k}(\Gamma_0(10))$ is:

\[\forall k\se 2, \ \ {\cal C}_{2k}(\Gamma_0(10)) = (F_{4,10}^{(1)}[E_{2,10}^{(0)}]^{k-2}, \ F_{4,10}^{(2)}[E_{2,10}^{(0)}]^{k-2}) \cup F_{4,10}^{(3)}{\cal B}_{2k-4}(\Gamma_0(10)).\]

%%%%%%%%%%%%%%%%%%%%%%%%%%%%%%%%%%%%%%%%%%%%%%%%%%%%%%%%%%%%%%%%%%%%%%%%%%%%%%%%%%%%%%%%%%%%%%%%%%%%%%%%%%%%%%%%%%%%%%%%%%%%%%%%%%%%%%%%%%%%%%%%%%%%%%%%%%%%%%%%%%%%%%
%%%%%%%%%%%%%%%%%%%%%%%%%%%%%%%%%%%%%%%%%%%%%%%%%%%%%%%%%%%%%%%%%%%%%%%%%%%%%%%%%%%
\bs
\bs
\bs
\bs

\end{document}